\documentclass[11pt]{amsart}

\hoffset=-.5in
\usepackage{amsmath,amssymb,amsthm,amsfonts,enumitem,color}
\usepackage[numbers]{natbib}
\voffset=-.5in

\setlength{\textheight}{8.5in}
\setlength{\textwidth}{6in}

\newtheorem{thm}{Theorem}[section]
\newtheorem{cor}[thm]{Corollary}

\newtheorem{prop}[thm]{Proposition}

\theoremstyle{remark}
\newtheorem{rmk}[thm]{Remark}

\newtheorem{obs}[thm]{Observation}

\theoremstyle{definition}
\newtheorem{dfn}[thm]{Definition}
\newtheorem{quest}[thm]{Question}

\newcommand{\la}{\left <}
\newcommand{\ra}{\right >}
\newcommand{\uphp}{\upharpoonright}
\newcommand{\ov}{\overline}
\newcommand{\smf}{\smallfrown}

\newcommand{\A}{\mathcal{A}}
\newcommand{\B}{\mathcal{B}}
\newcommand{\C}{\mathcal{C}}
\newcommand{\D}{\mathcal{D}}
\newcommand{\F}{\mathcal{F}}
\newcommand{\Hh}{\mathcal{H}}

\newcommand{\I}{\mathcal{I}}
\newcommand{\J}{\mathcal{J}}

\newcommand{\K}{\mathcal{K}}

\newcommand{\M}{\mathcal{M}}
\newcommand{\N}{\mathcal{N}}
\newcommand{\Oo}{\mathcal{O}}
\newcommand{\R}{\mathcal{R}}

\newcommand{\U}{\mathcal{U}}
\newcommand{\V}{\mathcal{V}}
\newcommand{\W}{\mathcal{W}}

\newcommand{\Bb}{\B_{\textrm{big}}}

\newcommand{\Wo}{\leftexp{<\omega}{\omega}}


\newcommand{\raw}{\rightarrow}

\newcommand{\ms}{\vspace{.1in}}

\newcommand{\ii}{\ov{\imath}}
\newcommand{\jj}{\ov{\jmath}}
\newcommand{\so}{\ov{s}}
\newcommand{\doh}{\ov{d}}
\newcommand{\nub}{\bar\nu}

\newcommand{\len}{<_{\textrm{len}}}
\newcommand{\lx}{<_{\textrm{lex}}}

\newcommand{\gr}{\textrm{gr}}

\newcommand{\age}{\textrm{age}}
\newcommand{\Aut}{\textrm{Aut}}
\newcommand{\qftp}{\textrm{qftp}}
\newcommand{\emtp}{\textrm{EMtp}}
\newcommand{\leftexp}[2]{{\vphantom{#2}}^{#1}{#2}}
\newcommand{\ran}{\textrm{ran~}}

\newcommand{\tr}{\textrm{stree}}
\newcommand{\str}{\textrm{strtree}}
\newcommand{\eq}{\textrm{eq}}
\newcommand{\fr}{Fra\"{i}ss\'{e}}
\newcommand{\red}{\textrm{red}}

\title{Ramsey Transfer to Semi-Retractions}
\author{Lynn Scow}
\thanks{Part of this article
is based upon work supported by the National Science Foundation
under Grant No. DMS-1928930 while the author participated in a program hosted
by the
Mathematical Sciences Research Institute in Berkeley, California, during the
Fall 2020 semester.}
\keywords{generalized indiscernible sequences, modeling property, Ramsey classes, ordered structures, NIP theories}
\subjclass[2010]{03C45,03C95,03C30,03C98,05C55}

\begin{document}

\begin{abstract}
We introduce the notion of a {\it semi-retraction}.  Given two structures $\A$ and $\B$, $\A$ is a semi-retraction of $\B$ if there exist quantifier-free type respecting maps $f: \B \raw \A$ and $g: \A \raw \B$ such that $f \circ g$ is an embedding.  We say that a structure has the Ramsey property if its age does.  Given two locally finite ordered structures $\A$ and $\B$, if $\A$ is a semi-retraction of $\B$ and $\B$ has the Ramsey property, then $\A$  also has the Ramsey property.  We introduce notation for what we call semi-direct product structures, after the group construction known to preserve the Ramsey property.~\cite{kpt05}  We introduce the notion of a color-homogenizing map, and use this notion to give a finitary argument that the semi-direct product structure of ordered relational structures with the Ramsey property must also have the Ramsey property.  Finally, we characterize NIP theories using a generalized indiscernible sequence indexed by a semi-direct product structure.
\end{abstract}

\maketitle

\section{introduction}

Structural Ramsey theory is the study of partition properties of classes of first-order structures.  We may consider the natural numbers as a first-order structure, $\M=(\omega,<)$, in the signature consisting of one binary relation symbol $<$ for order.    The finite substructures of $\M$ up to isomorphism form the \emph{age}, $\K$, of $\M$, which we may call $\age(\M)$ (see Section 2 for more detailed definitions).
For every integer $n \geq 1$, there is a unique linear order $\A_n \in \K$.  Let ${ \M \choose \A_n}$ denote all substructures of $\M$ isomorphic to $\A_n$.  Given an integer $k \geq 1$, a \emph{$k$-coloring} of ${\M \choose \A_n}$ is a function $f: {\M \choose \A_n} \raw k$.  It is clear that $\{f^{-1}(i) \mid i \in k\}$ forms a finite partition of ${\M \choose \A_n}$.
Ramsey's theorem for finite sequences states that for any integers $k, n, m \geq 1$, there exists an integer $N$ such that for any $k$-coloring $f$ of ${\A_N \choose \A_n}$ there exists $\B \subseteq \A_N$ such that $\B \cong \A_m$ and $f \uphp {\B \choose \A_n}$ is a constant function.~\cite{ra29}
The property just described is called the \emph{Ramsey property (RP) for $\K$}, and may be stated for any class $\K$ of finite structures in some signature.
We say that a locally finite structure $\M$ has RP if $\age(\M)$ has RP.  
In this paper, we consider RP only for structures $\M$ that are locally finite and ordered by some $0$-definable relation.
To read a survey of some recent work in structural Ramsey theory, please see \cite{nvt}.

We consider the following:
\begin{quest}\label{qu}
What mechanisms transfer RP from one ordered structure to another?  
\end{quest}
\noindent It is natural to ask whether taking reducts or expansions could preserve RP, perhaps under some additional assumptions.
In \cite{ne05}, it is shown that the age of any linearly ordered structure with RP must have the amalgamation property (AP). 
Thus, if $\B$ is an ordered structure with RP and $\A$ is an ordered reduct of $\B$ that fails to have AP, then $\A$ fails to have RP.
For example, let $\I_0 = (\Wo, \unlhd, \wedge, \lx)$ where $\unlhd, \wedge, \lx$
are defined as in Definition \ref{trees}.
$\I_0$ is shown to have RP in \cite{le73} but the reduct $\I_t = \I_0 \uphp \{ \unlhd,\lx\}$, though ordered, has an age that fails to have AP, and thus fails to have RP (see \cite{sok151} or Corollary 3.19 of \cite{sc15} for a discussion).  

There are many examples of ages of ordered structures that have AP but not RP, and the class of all finite partial orders with an added linear order is one such age (see Lemma 4 in \cite{sok111}).  The class of all finite equivalence relations that are linearly ordered is another example of an ordered class with AP but not RP, though the class of all finite equivalence relations with a convex linear order does have RP. \cite{kpt05}  

Furthermore, given an age with RP, not all ordered reducts with AP have RP.
Let $\K_1$ be the class of all finite convexly ordered equivalence relations in the signature $\sigma_1=\{E_1,<_1\}$, and let $\K_2$ be the same in a disjoint signature $\sigma_2=\{E_2,<_2\}$.  As in Definition 3.21 of \cite{bod15}, define the \emph{free superposition} $\K_1 \ast \K_2$, which is the class of all finite $(\sigma_1 \cup \sigma_2)$-structures whose $\sigma_i$-reduct is in $\K_i$.  The \fr~limit of $\K_1 \ast \K_2$ has RP, by Theorem 3.24 in \cite{bod15}, but the ordered reduct to $\{E_1,E_2,<_1\}$ does not, even though its age has AP.
It is worth noting that, in some cases, one may start with a class of finite structures that is not ordered and does not have AP or RP, and achieve these properties by expanding the signature.  
The case of bowtie-free graphs in \cite{hune18} is one such example.
It is also natural to consider interpretations of one structure in another in relation to Question \ref{qu} (see \citep[Definitions 7.1, 7.6 in \textit{Models and Groups}]{kama94} for background on interpretations).
Recent work has shown how notions of interpretability of one structure in another may transfer RP: ``simply bi-definable'' expansions in \citep[Proposition 9.1]{kpt05} and Ramsey expansions of a structure interpretable in a Ramsey structure in \citep[Proposition 3.8]{bod15}.
By a well-known result from \cite{kpt05}, a closed subgroup $G \leq S_{\infty}$ is extremely amenable if and only if $G$ is the automorphism group of an ordered \fr~limit with RP. 
Countable structures $\A$ and $\B$ have homeomorphic automorphism groups if and only if $\A$ and $\B$ are infinitarily bi-interpretable (see \citep[Corollary 7.7 in \textit{Models and Groups}]{kama94} for a proof).
By a combination of these results, given two  linearly ordered \fr~limits $\F_1$ and $\F_2$, if $\F_1$ and $\F_2$ are infinitarily bi-interpretable, then $\F_1$ has RP if and only if $\F_2$ has RP.

In this paper, we introduce the notion of a \textit{semi-retraction} (see Definition \ref{sr}).  A semi-retraction has some elements in common with an infinitary interpretation.  
In Corollary \ref{transfer}, we show that for any locally finite ordered structures $\A$ and $\B$, if $\B$ has  RP and $\A$ is a semi-retraction of $\B$, then $\A$ also has  RP.  
In Section 2, we explain our notation as well as some background on RP, the modeling property and generalized indiscernible sequences.  In Section 4, we define {color-homogenizing maps} in Definition \ref{ch} and prove the corresponding RP transfer result in Theorem \ref{thm2}.  In Section 5, we define semi-direct product structures in Definition \ref{sdps} and apply Theorem \ref{thm2} to obtain a finitary argument that the semi-direct product structure obtained from ordered relational structures with RP has RP (Theorem \ref{thm3}).  In Section 6, we deduce examples of structures with RP as special cases of Theorem \ref{thm3}, some of which are known.   We also prove a characterization of NIP theories using a generalized indiscernible sequence indexed by the semi-direct product structure named in Corollary \ref{examples}(2)(see \cite{scgh17} for more results of this kind).

\section{preliminaries}

\subsection{Notation and conventions}

The notation $\raw$ is reserved for the function arrow in $f: A \raw B$ as well as for the Erd\H{o}s-Rado partition arrow in $\C \raw (\B)^\A_k$ (see \ref{notions} \textit{Ramsey notions}).  The notation $\Rightarrow$ is reserved for the material conditional.

\subsubsection{Size and order}
For a set $X$, $|X|$ is the cardinality of $X$.  
We follow the logical convention that $\omega$ is the set of non-negative integers, and for any $n \in \omega$, $n= \{0, 1, \ldots, n-1\}$.  Thus, ``$i<n$'' and ``$i \in n$'' may be used interchangeably, for any $i \in \omega$.
The notation $<$ is reserved for the linear order on $\omega$, and a different symbol is used for linear orders on other structures.
An \emph{order} is a linear order unless otherwise specified.
For sets $A, B$, by $\leftexp{A}{B}$, we mean the set of all functions $f: A \raw B$.
Tuples $\ov{a}$ from $\A$ are finite sequences $(a_i)_{i < n}$ for some $n \in \omega$ and for some $a_i \in \A$, for all $i < n$.
We define $(\ov{a})_i=a_i$ and $\ran \ov{a} = \{a_i \mid i < n\}$.
We reserve the notation $\ell$ for the function that outputs the length of a tuple: e.g. $\ell(\ov{a}) = n$, when $\ov{a} = (a_i)_{i<n}$.
Given a function $f: A \raw B$ and a tuple $\ov{a} = (a_0,a_1,\ldots,a_{n-1})$ from $A$, $f(\ov{a})$ is defined to be the tuple $(f(a_0),f(a_1),\ldots,f(a_{n-1}))$ from $B$.
Given some $m$-tuples $\ov{a}_{j_i}$ for all $i < n$, by $\ov{a}_{\ov{\jmath}}$ we mean the $m \cdot n$-tuple $(\ov{a}_{j_0}, \ldots, \ov{a}_{j_{n-1}})$.

\subsubsection{Structures} 

A \emph{signature} is a set of relation and function symbols with 
assigned arities (where $0$-ary function symbols play the role of constant symbols).
Given a signature $L$, an \emph{$L$-structure} $\A$ consists of an underlying set $|\A|$ with interpretations of all symbols in $L$ as relations or functions on $|\A|$ of the correct arity. 
For example, in the case that $R \in L$ is a relation symbol of arity $n$, $\A$ interprets $R$ as some relation $R^\A \subseteq |\A|^n$.  (See \cite{ho93} as a reference for common model-theoretic terms.) 
As usual, $a \in \A$ means that $a \in |\A|$, and we may use the symbol $R$ to stand for its interpretation, if the intended structure $\A$ is understood.
We say that $\A$ is a structure \emph{on $|\A|$ in the signature $L$}.  
The cardinality of a structure, $\A$, is denoted by $||\A||$.
The \emph{(first-order) language} of $L$ is the set of all first-order $L$-formulas.
Given a structure $\A$, we use $\sigma(\A)$ to refer to the signature of $\A$ and $L(\A)$ to refer to the language of $\A$.
By an \emph{ordered structure} $\A$, we mean one that is linearly ordered by 
some binary relation symbol in $\sigma(\A)$.
A subset $X \subseteq |\B|^n$ is \emph{0-definable (in $\B$)} if there exists an $n$-ary formula $\varphi(v_0,\ldots,v_{n-1})$ in $L(\B)$ such that for every $\ov{b} \in |\B|^n$, $\ov{b} \in X$ if and only if $\B \vDash \varphi(\ov{b})$.
Given two $L$-structures $\A_1, \A_2$ and $L' \subseteq L$, an \emph{$L'$-embedding}
$\sigma: \A_1 \raw \A_2$ is an injection from $|\A_1|$ into $|\A_2|$ such that for any $L'$-formula $\varphi$, for all $\ov{a}$ from $\A_1$, $\A_1 \vDash \varphi(\ov{a}) \Leftrightarrow \A_2 \vDash \varphi(f(\ov{a}))$.  
Given two $L$-structures $\A_1, \A_2$, an \emph{embedding} $\sigma: \A_1 \raw \A_2$ is assumed to be an $L$-embedding.
Given two $L$-structures $\A_1, \A_2$, an \emph{($L$-)isomorphism} $\sigma : \A_1 \raw \A_2$ is an embedding that is surjective onto $|\A_2|$, and thus $\sigma^{-1}: \A_2 \raw \A_1$ is also an embedding.
We denote that $\A_1, \A_2$ are isomorphic by
$\A_1 \cong \A_2$, or $\A_1 \cong_L \A_2$, for clarity.
For two $L$-structures $\A, \B$, $\A \subseteq \B$ means that $\A$ is a \emph{substructure} of $\B$, i.e., the identity map is an $L$-embedding from $\A$ into $\B$.

Given a relational structure $\A$ and a 0-definable subset $\D \subseteq |\B|$ suppose that there exists a bijection $f: \A \raw \D$ such that for every $n$ and 
$n$-ary relation symbol $R \in \sigma(\A)$,
there is a set $\hat{R} \subseteq |\B|^n$ that is 0-definable in $\B$ such that $\ov{a} \in R^\A$ if and only if $f(\ov{a}) \in \hat{R}$.  In this case we say that $\A$ is a \emph{reduct} of $\B$ (see \cite{ev94} for a statement of the more general case).
Given an $L$-structure $\B$ and a subset $L' \subset L$, by $\B \uphp L'$ we mean the $L'$-structure on $|\B|$ obtained from $\B$ by restricting to the symbols in $L'$.  
We refer to $\B \uphp L'$ as the \emph{$L'$-reduct} of $\B$.

The \emph{age}, $\K$, of a structure $\I$, denoted by $\K = \age(\I)$, is the collection of all finitely-generated substructures of $\I$, up to isomorphism.  (In the case that the signature of $\I$ is relational, $\age(\I)$ is the collection of all finite substructures of $\I$, up to isomorphism.)  Every age has the hereditary property and the joint embedding property (JEP)  (see \cite{ho93} for a reference).
We say that an age $\K$ has the \emph{amalgamation property (AP)} if given any structures $\A, \B, \C \in \K$ and embeddings $f_1 : \A \raw \B, f_2 : \A \raw \C$ (what we call an \emph{amalgamation problem in $\K$}), there exist a structure $\D \in \K$ and embeddings $g_1 : \B \raw \D, g_2: \C \raw \D$ (what we call a \emph{solution to the amalgamation problem}) such that $g_1 \circ f_1 = g_2 \circ f_2$.  
A structure $\M$ is \emph{ultrahomogeneous} if any isomorphism between finitely-generated substructures of $\M$ extends to an automorphism of $\M$.
The age of a countable, ultrahomogeneous structure in a countable signature has AP, and for every nonempty countable age $\K$ with AP in a countable signature, there is a countable ultrahomogeneous structure with age $\K$, $\M$, (unique, up to isomorphism) which we refer to as the \fr~limit of $\K$, \emph{Flim $\K$}.~\cite{fr54}  

Given any signature $L = \{R,\prec\}$ that consists of two binary relation symbols, the \emph{random ordered graph in the signature $L$} is defined to be any structure isomorphic to Flim $\K$, where $\K$ is the class of all finite $L$-structures (up to isomorphism) that are linearly ordered by $\prec$ and that interpret $R$ as a graph edge relation (symmetric, with no loops).  The relation that plays the role of $\prec$ should be clear from context.

\subsubsection{Types}
%
Given an integer $n \geq 1$ and an $n$-tuple $\ov{a}$ from $\A$, the \emph{quantifier-free type of $\ov{a}$ in $\A$}, which is denoted by $\qftp^\A(\ov{a})$, is the set of all $n$-ary quantifier-free formulas in $L(\A)$ satisfied by $\ov{a}$ in $\A$.
It is typical to write some subtype for $\qftp^\A(\ov{a})$, whose closure under logical consequence in $\A$ is $\qftp^\A(\ov{a})$.
We say that $\eta$ is a \textit{complete quantifier-free $n$-type} in $\A$ if $\eta = \qftp^\A(\ov{a})$ for some length-$n$ tuple $\ov{a}$ from $\A$.  
We say that $\eta$ is a \emph{complete quantifier-free type} if it is a complete quantifier-free $n$-type for some $n$.
We say that $\A \vDash \eta(\ov{a})$ if and only if $\A \vDash \theta(\ov{a})$, for all $\theta \in \eta$.
The notation $\eta(\ov{a})$ (similarly, $\theta(\ov{a})$) presupposes that $\ov{a}$ is a tuple of the correct length, and we may sometimes specify the length of $\ov{a}$ to aid in clarity.
We define $\eta(\A)$ to be all tuples 
from $\A$ that satisfy $\eta$ in $\A$.
 Given a structure $\A$, we write $ \ov{a} \equiv_\A \ov{b}$ to mean that $\A \vDash \varphi(\ov{a}) \Leftrightarrow \A \vDash \varphi(\ov{b})$, for all formulas $\varphi \in L(\A)$.
 We write $\ov{a} \sim_\A \ov{b}$ to mean that $\A \vDash \theta[\ov{a}] \Leftrightarrow \A \vDash \theta[\ov{b}]$, for all quantifier-free formulas $\theta$ in $L(\A)$.
The statement
$\ov{a} \sim_\A \ov{b}$
is equivalent to the statement 
$\qftp^\A(\ov{a}) = \qftp^\A(\ov{b})$
which is equivalent to the statement that
the map $a_i \mapsto b_i$ extends to an isomorphism of the structures generated by $\ov{a}$ and $\ov{b}$.

\subsubsection{Ramsey notions}\label{notions}
For any integer $k \geq 1$, a \emph{$k$-coloring} of a set $X$ is any function $c: X \raw k$.  
A \emph{copy of $\A$ in $\B$} is a substructure $\A' \subseteq \B$ where $\A' \cong \A$.  The set of all copies of $\A$ in $\B$ is denoted by ${\B \choose \A}$.  
Assuming that $\A$ is a structure ordered by the relation $\prec$, for any tuple $\ov{b}$ from $\A$, we say that $\ov{b}=(b_i)_{i<n}$ is an \emph{increasing tuple} if $b_i \prec b_j \Leftrightarrow i<j$, for all $i,j<n$.
Moreover, the \emph{increasing enumeration} of $\A$ is the increasing tuple $\ov{a}$ such that $\ran \ov{a} = |\A|$.
By an \emph{increasing copy of $\A$ in $\B$} we mean the increasing enumeration of $\A'$, where $\A'$ is some copy of $\A$ in $\B$.
We work with the following definition of the Ramsey property (see \cite{ne05,kpt05}).
\begin{dfn}\label{rpDef} We say that an age, $\K$, of finite structures has the $\A$-\emph{Ramsey property} if for all $\B \in \K$ and for any integer $k \geq 2$, there exists $\C \in \K$ such that for any $k$-coloring $c$ of ${\C \choose \A}$, there is a structure $\B'$ $\in {\C \choose \B}$ such that for any $\A', \A'' \in {\B' \choose \A}$, $c(\A') = c(\A'')$.  

We say that $\C$ is \emph{Ramsey} for $\A, \B, k$ and denote this property of $\C$ by the expression:
$$\C \raw (\B)^\A_k$$

We say that $\B'$ is a copy of $\B$ that is \emph{homogeneous for $c$ (on copies of $\A$)}.

We say that $\K$ has the \emph{Ramsey Property (RP)} if it has the $\A$-Ramsey property for all $\A \in \K$.

We say that a locally finite structure $\I$ has RP if age($\I$) has RP.
\end{dfn}

\begin{obs}\label{equiv}  Note that we obtain an equivalent definition of RP if we replace the arbitrary $k$-coloring $c: {\C \choose \A} \raw k$ in the definition with any function $c': {\C \choose \A} \raw Y$, where $Y$ is any set of cardinality $k$.
\end{obs}

We give a slight rephrasing of Theorem 4.2(i) from \cite{ne05} as Theorem \ref{nesAP}:

\begin{thm}\label{nesAP} If $\K$ is an age of ordered structures and $\K$ has RP, then $\K$ has AP.
\end{thm}

Theorem \ref{rgRP} is a well-known result from  \cite{abha78,rone77}: 

\begin{thm}\label{rgRP} The age of any random ordered graph has RP.
\end{thm}

\begin{rmk}\label{finite} If a finite ordered structure $\I$ has RP, then there are no isomorphisms between distinct subsets of $\I$.  This is easy to show letting $\I$ play the role of $\B$ in Definition \ref{rpDef}.  
\end{rmk}

\subsection{The modeling property}

In the study of classification theory in model theory there has been significant use of generalized indiscernible sequences, named ``$\I$-indexed indiscernible sets'' in \cite{sh90}.

\begin{dfn} Fix a structure $\I$, an integer $l \geq 1$, and $l$-tuples $\ov{a}_i$ from some structure $\M$, for all $i \in \I$.  We say that $(\ov{a}_i \mid i \in \I)$ is an \textbf{$\I$-indexed indiscernible set} if for any integer $n \geq 1$, for all $n$-tuples $\ii,\jj$ from $\I$,
$$\ii \sim_\I \jj \Rightarrow  \ov{a}_{\ii} \equiv_\M \ov{a}_{\jj} .$$

We say that $(\ov{a}_i \mid i \in \I)$ is an \textbf{$\I$-indexed indiscernible sequence} if $\I$ is an ordered structure, or a \textbf{generalized indiscernible sequence} if $\I$ is an ordered structure that is clear from context.
\end{dfn}

We repeat definitions from \cite{sc15} as Definition \ref{em} and Definition \ref{mpd}.

\begin{dfn}\label{em}  Given an integer $l \geq 1$, an $L'$-structure $\I$, an $L$-structure $\M$ and an $\I$-indexed set of $l$-tuples from $\M$, $X = (\ov{a}_i \mid i \in \I)$, we define the \emph{EM-type of $X$ ($\emtp(X)$)} to be a syntactic type in variables $(\ov{x}_i \mid i \in \I)$, where $\ell(\ov{x}_i) = l$ for each $i \in \I$, as follows:
$$\emtp(X) = \{ \psi(\ov{x}_{i_0},\ldots,\ov{x}_{i_{n-1}}) \mid \psi \in L, \ov{\imath} \in \leftexp{n}{\I} \textrm{~and~} (\forall  \ov{\jmath} \in \leftexp{n}{\I})( \ov{\jmath} \sim_\I \ov{\imath} \Rightarrow \M \vDash \psi(\ov{a}_{j_0},\ldots,\ov{a}_{j_{n-1}})) \}$$
\end{dfn}

Proposition \ref{useful} is a useful equivalence which follows directly from Definition \ref{em} (see Proposition 2 of \cite{sc15} for more details):

\begin{prop}\label{useful}Given an $L'$-structure $\I$ and an $L$-structure $\M$,
fix sets of $l$-tuples from $\M$ indexed by $\I$,
$X = (\ov{a}_i \mid i \in \I)$ and
$Y = (\ov{b}_i \mid i \in \I)$.
$Y \vDash \emtp(X)$ if and only if for any integer $n \geq 1$, for all complete quantifier-free $n$-types $\eta$ in $\I$ and all $n \cdot l$-ary formulas $\varphi \in L$, if
$$(\forall \jj) ( \I \vDash \eta(\jj) \Rightarrow \M \vDash \varphi(\ov{a}_{\jj}))$$
then
$$(\forall \jj) ( \I \vDash \eta(\jj) \Rightarrow \M \vDash \varphi(\ov{b}_{\jj}))$$
\end{prop}

If $\I$ is ordered by a 0-definable relation in $\I$, it is trivial to produce $\I$-indexed indiscernible sets, by Ramsey's theorem for finite sequences.  The following property guarantees that we can produce $\I$-indexed indiscernible sets that witness additional structure.

\begin{dfn}\label{mpd} 
Given a structure $\I$, we say that $\I$-indexed indiscernible sets have the \textbf{modeling property} if for any  integer $l \geq 1$, any $|\I|^+$-saturated structure $\M$, and any $\I$-indexed set of $l$-tuples from $\M$ 
$$X = (\ov{a}_i \mid i \in \I) ,$$
there exists an $\I$-indexed indiscernible set of $l$-tuples from $\M$
$$Y = (\ov{b}_i \mid i \in \I)$$
such that $Y \vDash \emtp(X)$.

We say that $Y$ is \textbf{locally based} on $X$.
\end{dfn}

\begin{thm}\label{character} Suppose that $\I$ is a locally finite ordered structure.
 $\I$-indexed indiscernible sequences have the modeling property if and only if $\age(\I)$ has RP.
\end{thm}

\begin{proof} In Theorem 3.12 of  \cite{sc15}, this result is stated for locally finite ordered structures $\I$ with the additional condition \texttt{qfi}, which stands for ``quantifier-free types are isolated by quantifier-free formulas".  Theorem 3.12 in \cite{sc15} generalizes Theorem 4.31 in \cite{sc12} which is stated only for ordered  structures $\I$ in a finite relational signature.  In fact, it was later pointed out to the author that the \texttt{qfi} assumption is not needed (see the \textit{Acknowledgements} section).  To see this, in the argument for \citep[Claim 3.13]{sc15}, replace $L'$ with an expansion $L''$ such that $L'' \setminus L'$ consists of a predicate $p_\A(\ov{x})$ for the quantifier-free type of the increasing enumeration of $\A$, for every finite substructure $\A \subseteq \mathcal{\I}$.  Then apply compactness to the type $S$ where we replace $T_\forall \cup \textrm{Diag}(\mathcal{\I})$ with the diagram of $\mathcal{\I}$ in $L''$.  It is noted in the proof of \citep[Theorem 3.12]{sc15} that the \texttt{qfi} hypothesis is used only in the argument for Claim 3.13.
By the present argument, we see why it is not even needed there.  
\end{proof}

\section{transfer by semi-retractions}

For concrete examples, we give the definitions of the \textit{Shelah tree} $\I_\tr$, the \textit{strong tree} $\I_\str$ and the \textit{convexly ordered equivalence relation} $\I_\eq$.  All three structures are locally finite ordered structures.  An exposition of the proof that $\I_\tr$- and $\I_\str$-indexed indiscernible sequences have the modeling property is given in \cite{kks14}.  A proof that $\I_\eq$ has RP is given in \citep[Theorem 6.6]{kpt05}.  This latter fact is equivalent to array-indiscernible sequences having the modeling property, and array-indiscernible sequences have been a common tool in model theory (\citep[Lemma 5.6]{kks14} provides a direct proof of the modeling property).

\begin{dfn}\label{trees}
\begin{itemize}
\item Define $\I_\tr$ to be the structure on $\Wo$ (finite sequences from $\omega$) in the signature
 $\{\unlhd, \wedge, \lx, \{P_n\}_{n \in \omega} \}$ where for all $\eta, \nu \in \Wo$, $\eta \unlhd \nu$ if and only if $\eta$ is an initial segment of $\nu$, $\wedge$ is the meet in the partial order $\unlhd$, $\lx$ is the lexicographic order on finite sequences, i.e. $\eta \lx \nu$ if and only if
$$\eta \unlhd \nu \vee \eta(  \ell(\eta \wedge \nu) ) < \nu( \ell(\eta \wedge \nu) ) ,$$

\noindent and $ \eta \in P_n \Leftrightarrow \ell(\eta) = n$, for all $n \in \omega$.
\item Define $\I_\str$ to be the structure on $\Wo$ in the signature $\{\unlhd, \wedge, \lx, \len \}$ where $\unlhd, \wedge, \lx$ are interpreted as in $\I_\tr$ and $\len$ is the preorder on $\mu, \nu \in \Wo$ defined by the lengths of the sequences:
$$\mu \len \nu \Leftrightarrow \ell(\mu) < \ell(\nu)$$
\item Define $\I_\eq$ to be the structure on $\omega \times \omega$ in the signature $\{ E, \prec \}$ where for all $(i,j),(s,t) \in \omega \times \omega$, $(i,j)E(s,t) \Leftrightarrow i=s$ and 
$(i,j) \prec (s,t) \Leftrightarrow i<s \vee (i=s \wedge j<t)$.
\end{itemize}
\end{dfn}

\begin{dfn}  Given any structures $\A, \B$, we say that an injection $h: \A \raw \B$ is \textbf{quantifier-free type respecting (qftp-respecting)} if for all finite same-length tuples $\ii, \jj$ from $\A$,
$$\ii \sim_\A \jj \Rightarrow h(\ii) \sim_\B h(\jj) .$$
\end{dfn}

In the following Definition, the term ``semi-retraction'' is inspired by the definition of ``retraction'' from \cite{ahzi86}.  
Corollary 1.4 in \cite{ahzi86} gives an equivalent condition for a countable, $\aleph_0$-categorical structure in a countable signature to be a retraction (attributed to T. Coquand):
$\A$ is a retraction of $\B$ if and only if there are continuous homomorphisms
$$\Aut(\A) \overset{\varphi}\raw \Aut(\B) \overset{\psi}\raw \Aut(\A)$$
such that $\psi \circ  \varphi = 1$.  

\begin{dfn}[semi-retractions]\label{sr} Let $\A$ and $\B$ be any structures.  We say that $\A$ is a \textbf{semi-retraction of $\B$} if
there exist qftp-respecting injections $g: \A \raw \B$ and $f: \B \raw \A$ such that for any complete quantifier-free type $\eta$ in $\A$, for any $\ov{s}$ from $\A$,
\begin{enumerate}[label=(\roman*)]
\item\label{red3} $\A \vDash \eta(\ov{s}) \Rightarrow \A \vDash \eta((f \circ g)(\ov{s}))$
\end{enumerate}
\end{dfn}

\begin{obs}\label{csb} If $\A$ is a semi-retraction of $\B$, then $||\A||=||\B||$, by the Schr\"{o}der-Bernstein theorem.
\end{obs}

\begin{thm}\label{thm1}  Let $\A$ and $\B$ be any 
structures.  Suppose that $\A$ is a semi-retraction of $\B$.  Furthermore, suppose that $\B$-indexed indiscernible sets have the modeling property.  
Then $\A$-indexed indiscernible sets have the modeling property.
\end{thm}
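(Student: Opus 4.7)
The plan is to use the qftp-preserving maps $g : A \raw B$ and $f : B \raw A$ to transport indexings back and forth between $A$ and $B$, letting the $B$-side modeling property do the heavy lifting. Given same-length tuples $X = (a_i : i \in A)$ from a sufficiently saturated $\U$, first pull back along $f$ to form the $B$-indexed set
$$X' = (a_{f(b)} : b \in B).$$
By the hypothesis that $B$-indexed indiscernible sets have the modeling property, pick a $B$-indexed indiscernible $Y' = (b'_c : c \in B)$ in $\U$ with $Y' \vDash \emtp^B(X')$. Push forward along $g$ to define the candidate
$$Y = (b'_{g(i)} : i \in A).$$
Two tasks remain: show $Y$ is $A$-indexed indiscernible, and show $Y \vDash \emtp^A(X)$.

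$A$-indexed indiscernibility of $Y$ drops out immediately: if $\ii \sim_A \jj$, then qftp-preservation of $g$ gives $g(\ii) \sim_B g(\jj)$, whence $B$-indiscernibility of $Y'$ gives $b'_{g(\ii)} \equiv_\U b'_{g(\jj)}$, which is exactly indiscernibility of $Y$ on $(\ii,\jj)$.

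The main content is local basedness. Fix a rule $(\eta,\varphi)$ of $\emtp^A(X)$ and a tuple $\jj$ from $A$ with $A \vDash \eta(\jj)$; the goal is $\U \vDash \varphi(b'_{g(\jj)})$. The key claim is that $(\eta_B,\varphi)$ is a rule of $\emtp^B(X')$, where $\eta_B := \qftp^B(g(\jj))$. To verify this, let $\bar c \in B$ satisfy $\eta_B$, so $\bar c \sim_B g(\jj)$. Then qftp-preservation of $f$ gives $f(\bar c) \sim_A f(g(\jj))$, and semi-retraction condition (i) applied to $\eta$ and $\jj$ gives $A \vDash \eta(f(g(\jj)))$. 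Combining these, $A \vDash \eta(f(\bar c))$, so the rule $(\eta,\varphi)$ for $X$ yields $\U \vDash \varphi(a_{f(\bar c)}) = \varphi(a'_{\bar c})$. Hence $(\eta_B,\varphi)$ is indeed a rule of $\emtp^B(X')$. Since $Y' \vDash \emtp^B(X')$ and $g(\jj) \vDash \eta_B$ by construction, we conclude $\U \vDash \varphi(b'_{g(\jj)})$, as required.

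The only nontrivial structural step is the reflection of each $A$-rule of $\emtp^A(X)$ to a $B$-rule of $\emtp^B(X')$, and this is exactly what semi-retraction condition (i) buys: without it, one cannot conclude that $f(\bar c)$, as $\bar c$ ranges over $\eta_B$, lands back inside the $A$-type $\eta$. Everything else is formal bookkeeping driven by the qftp-preservation of $g$ and the hypothesized modeling property on $B$.
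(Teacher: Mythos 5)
Your proof is correct and follows essentially the same route as the paper's: pull the indexing back along $f$, invoke the modeling property for $B$, push forward along $g$, and use condition (i) of the semi-retraction together with qftp-preservation of $f$ to transfer each rule $(\eta,\varphi)$ of $\emtp(X)$ to $\emtp(X')$. The only (cosmetic) difference is that the paper describes the full pullback $f^{-1}(\eta(A))$ as a union of complete types $\delta_k$ and transfers all of them, whereas you transfer only the single complete type $\qftp^B(g(\jj))$ of the witness at hand, which suffices.
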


\begin{proof} Fix structures $\A$ and $\B$ such that $\A$ is a semi-retraction of $\B$ and assume that $\B$-indexed indiscernible sets have the modeling property.  Fix an integer $l \geq 1$ and an $\A$-indexed  set of 
$l$-tuples from some $|\A|^+$-saturated structure $\M$
$$X = (\ov{c}_i \mid i \in \A) . $$
We want to find an $\A$-indexed indiscernible set of $l$-tuples from $\M$
$$Y = (\ov{e}_i \mid i \in \A)$$
such that $Y \vDash \emtp(X)$.

Let $g: \A \raw \B$ and $f: \B \raw \A$ witness that $\A$ is a semi-retraction of $\B$.  Define
$$X ' = (\ov{c}_{f(j)} \mid j \in \B) . $$
By assumption, there is a $\B$-indexed indiscernible set from $\M$
$$Y' = (\ov{d}_j \mid j \in \B)$$
such that $Y' \vDash \emtp(X')$ (recall that $|\A|=|\B|$ by Observation \ref{csb}, so $\M$ remains sufficiently-saturated).

Let $\ov{e}_i =\ov{d}_{g(i)}$.   It remains to show that $Y = (\ov{e}_i \mid i \in \A)$ is the desired set.

\ms

To see that $Y$ is an $\A$-indexed indiscernible set, fix $\ov{\imath}_1 \sim_\A \ov{\imath}_2$.  Since $g$ is qftp-respecting, $g(\ov{\imath}_1) \sim_\B g(\ov{\imath}_2)$.  By $\B$-indexed indiscernibility of $Y'$:
$$\ov{d}_{g(\ov{\imath}_1)} \equiv_{\M} \ov{d}_{g(\ov{\imath}_2)}$$
i.e.
$$\ov{e}_{\ov{\imath}_1} \equiv_{\M} \ov{e}_{\ov{\imath}_2} .$$

Now fix a complete quantifier-free $n$-type $\eta$, and an $n \cdot l$-ary formula $\varphi$ such that 
\begin{eqnarray}\label{1}
(\forall \ii) (\A \vDash \eta(\ov{\imath}) \Rightarrow \M \vDash \varphi(\ov{c}_{\ov{\imath}})) .
\end{eqnarray}
Also fix $\ov{s}$ so that
\begin{eqnarray}\label{2}
\A \vDash \eta(\ov{s}) .
\end{eqnarray}
To see that $Y \vDash \emtp(X)$, we wish to show that $\M \vDash \varphi(\ov{e}_{\ov{s}})$.

Since $f$ is qftp-respecting, there is an index set $S$ and there are some quantifier-free $n$-types $(\delta_k \mid k \in S)$ in $\B$ such that for any $n$-tuple $\ov{\jmath}$ from $\B$,
\begin{eqnarray}\label{3}
\A \vDash \eta(f(\ov{\jmath})) \Leftrightarrow \B \vDash \bigvee_{k \in S} \delta_k(\ov{\jmath}) ,
\end{eqnarray}
equivalently,
$$f^{-1}(\eta(\A)) = \bigcup_{k \in S} \delta_k(\B) .$$
Thus, via assumptions \eqref{1} and \eqref{3} we get that for all $k \in S$,
\begin{eqnarray}\label{4}
(\forall \ov{\jmath})(\B \vDash \delta_k(\ov{\jmath}) \Rightarrow \M  \vDash \varphi(\ov{c}_{f(\ov{\jmath})})) .
\end{eqnarray}
Since $Y' \vDash \emtp(X')$, by Proposition \ref{useful} we get that for all $k \in S$,
\begin{eqnarray}\label{5}
(\forall \ov{\jmath})(\B \vDash \delta_k(\ov{\jmath}) \Rightarrow \M  \vDash \varphi(\ov{d}_{\ov{\jmath}})) .
\end{eqnarray}

By condition \ref{red3} of Definition \ref{sr}:
\begin{eqnarray}\label{6}
\A \vDash \eta(\ov{s}) \Rightarrow \A \vDash \eta((f \circ g)(\ov{s})) .
\end{eqnarray}

Observe that by \eqref{3} and letting $\ov{\jmath} = g(\ov{s}) ,$
\begin{eqnarray}\label{7}
\A \vDash \eta((f \circ g)(\ov{s})) \Rightarrow \B \vDash \bigvee_{k \in S} \delta_k(g(\ov{s})) .
\end{eqnarray}
So we conclude by \eqref{2}, \eqref{6} and \eqref{7}:
\begin{eqnarray}\label{8}
\B \vDash \bigvee_{k \in S} \delta_k(g(\ov{s})) .
\end{eqnarray}
Apply this fact to \eqref{5} letting $\ov{\jmath} = g(\ov{s})$ to get 
\begin{eqnarray}\label{9}
\M \vDash  \varphi(\ov{d}_{g(\ov{s})}) 
\end{eqnarray}
i.e.
\begin{eqnarray}\label{10}
\M \vDash  \varphi(\ov{e}_{\ov{s}}) 
\end{eqnarray}
as desired.
\end{proof}

\begin{rmk} In the proof for Theorem \ref{thm1}, $Y = (\ov{d}_{g(i)} \mid i \in \A)$ is an $\A$-indexed indiscernible set because $(\ov{d}_j \mid j \in \B)$ is a $\B$-indexed indiscernible set and the map $g$ is qftp-respecting.  
It is only in verifying that $Y \vDash \emtp(X)$ that we use the map $f$ and condition \ref{red3} in Definition \ref{sr}.
\end{rmk}

\begin{cor}\label{transfer} Let $\A$ and $\B$ be locally finite ordered structures.  Suppose that $\A$ is a semi-retraction of $\B$ and $\B$ has RP.  Then $\A$ has RP.
\end{cor}

\begin{proof}  By Theorem \ref{thm1} and Theorem \ref{character}.
\end{proof}

\begin{cor}\label{treecor} If $\I_\str$ has RP, then $\I_\eq$ has RP.
\end{cor}

\begin{proof}   Define $\A$ to be the structure on the underlying set $\omega \times \mathbb{Q}$ with the same definition as $\I_\eq$ on $\omega \times \omega$ (thus, $\age(\A) = \age(\I_\eq)$ and each equivalence class in $\A$ is densely ordered by $\prec$).
Define $\B$ to be the structure on the underlying set $\leftexp{<\omega}{\mathbb{Q}}$ with the same definition as $\I_\str$ on $\leftexp{<\omega}{\mathbb{\omega}}$ (thus, $\age(\B) = \age(\I_\str)$ and the $\unlhd$-successors of any fixed node in $\B$ are densely ordered by $\lx$).
It remains to show that $\A$ is a semi-retraction of $\B$, in order to apply Corollary \ref{transfer}.

Our referee for \cite{kks14} kindly suggested that we deduce RP for $\A$ from RP for $\B$ by constructing a special embedding $g: \A \raw \B$ that is qftp-respecting (see \citep[Theorem 5.5]{kks14} for details).  
Given $i \in \omega$, by the $i$th level in $\B$, we mean all sequences in $\leftexp{<\omega}{\mathbb{Q}}$ of length $i$, and by the $i$th equivalence class in $\A$, we mean $\{(i,x) \mid x \in \mathbb{Q} \}$.

Let $\eta_i = {\underbrace{\la 0, \ldots, 0 \ra}_{2i}}$.  Let $g$ take the $i$th equivalence class in $\A$ into $\{\eta_i^\smf\la j \ra \mid j \in \mathbb{Q}_{>0}\}$ in a way that preserves the order.
Let $f: \B \raw \A$ be the map that takes the $i$th level in $\B$ into the $i$th equivalence class in $\A$ in a way that preserves the order.
$\A$ is a semi-retraction of $\B$ witnessed by $g$ and $f$.
\end{proof}

\begin{rmk} In Corollary \ref{treecor}, we have an example of $f, g$ witnessing that $\A$ is a semi-retraction of $\B$, such that $f \circ g$ is an embedding, but $g \circ f$ is not an embedding.  
\end{rmk}

\section{transfer by color-homogenizing maps}

We start with a technical definition.

\begin{dfn}[color-homogenizing maps]\label{ch} Fix ordered structures $\V$ and $\W$, and integers $m, k \geq 1$.  Given a finite substructure $\B \subseteq \V$, a $k$-coloring $c$ on increasing $m$-tuples from $\W$ and an 
increasing function $g: \B \raw \W$, we say that \textbf{$g$ is color-homogenizing for $c$ and $\B$} if for all increasing $m$-tuples $\ii, \jj$ from $\B$,
$$\ii \sim_\V \jj  \Rightarrow c(g(\ii)) =  c(g(\jj)) .$$
\end{dfn}

\begin{thm}\label{thm2} Let $\V$ and $\W$ be any ordered structures such that $\V$ is locally finite.  Suppose that there is an increasing function $f: \W \raw \V$ such that for any integers $m,k \geq 1$, any finite substructure $\B \subseteq \V$ and any $k$-coloring $c$ on increasing $m$-tuples from $\W$, there is a color-homogenizing map $g$ for $c$ and $\B$ such that $f \circ g: \B \raw \V$ is a $\sigma(\V)$-embedding.

Then $\V$ has RP.
\end{thm}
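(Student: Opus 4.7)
The plan is to prove RP for $\age(A)$ directly from the color-homogenizing hypothesis, in the spirit of Corollary \ref{treecor}: transport any would-be bad coloring on $A$ over to a coloring on $B$ via $f$, use the hypothesis to extract a color-homogeneous image inside $B$, and push back into $A$ via $f$. A standard compactness argument (enumerating $A = \{a_n\}_{n < \omega}$, extending hypothetical bad finite colorings to all of ${A \choose A_0}$, and choosing an accumulation point in the compact product space $[k]^{{A \choose A_0}}$) reduces the finite RP statement for $\age(A)$ to the following infinite Ramsey statement: for any $A_0 \subseteq C_0$ in $\age(A)$ and any $k$-coloring $c: {A \choose A_0} \to [k]$, there is a homogeneous copy $C_0' \in {A \choose C_0}$.

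To produce such a $C_0'$, fix $A_0, C_0, k, c$ and set $m := \|A_0\|$. Embed $C_0$ in $A$ as a finite substructure $B_0 \subseteq A$, and define a $(k+1)$-coloring $c'$ of increasing $m$-tuples of $B$ by setting $c'(\bar b) := c(f(\bar b))$ when $f(\bar b)$ is an increasing $m$-tuple in $A$ enumerating a copy of $A_0$, and $c'(\bar b) := k+1$ otherwise. Apply the hypothesis to $B_0$, $m$, $k+1$, and $c'$, obtaining an injection $g: B_0 \to B$ that is color-homogenizing for $c'$ and such that $f \circ g: B_0 \to A$ is an $L$-embedding. Set $C_0' := (f \circ g)(B_0)$.

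To verify homogeneity of $C_0'$, observe that because $f \circ g$ is an $L$-embedding and the linear order lives in $L$, it preserves both the order and quantifier-free types. Hence for any increasing $m$-tuple $\bar \imath$ from $B_0$ enumerating a copy of $A_0$, the tuple $(f \circ g)(\bar \imath)$ is an increasing $m$-tuple in $A$ enumerating a copy of $A_0$; in particular, $c'(g(\bar \imath)) = c((f \circ g)(\bar \imath)) \in [k]$ rather than the default value $k+1$. All such $\bar \imath$ realize a single quantifier-free type in $A$, so color-homogenization of $g$ forces $c'(g(\bar \imath)) = \gamma$ for some fixed $\gamma \in [k]$. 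The $L$-isomorphism $f \circ g: B_0 \to C_0'$ induces a bijection between ${B_0 \choose A_0}$ and ${C_0' \choose A_0}$, so $c$ takes the constant value $\gamma$ on ${C_0' \choose A_0}$, as required.

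The main obstacle I anticipate is clerical rather than conceptual: $c'(g(\bar \imath))$ must actually be defined, which means $g(\bar \imath)$ must be an increasing tuple in $B$; this forces the natural reading of $g$ as an order-preserving injection (the only sensible choice given that $c'$ is defined only on increasing tuples). I also need to ensure the default color $k+1$ never appears on the $A_0$-class, which is precisely what the $L$-embedding hypothesis on $f \circ g$ gives. The compactness reduction to an infinite statement is routine for countable $A$ and does not require $\age(A)$ to satisfy amalgamation.
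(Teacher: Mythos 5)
Your proof is correct and follows essentially the same route as the paper's: pull the coloring back from $A$ to $B$ along $f$, invoke the color-homogenizing hypothesis to get $g$, and take $(f\circ g)(B_0)$ as the homogeneous copy, using that $f\circ g$ is an $L$-embedding of an ordered structure to match up increasing enumerations of copies of $A_0$. The only differences are cosmetic and in your favor: you make explicit the standard compactness reduction from the infinite partition statement to RP for $\age(A)$ (which the paper leaves implicit), and you handle tuples not enumerating copies of $A_0$ with a sentinel color $k+1$ rather than an arbitrary extension of the coloring.
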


\begin{proof}
Fix finite substructures $\A$ and $\B$ of $\V$ and suppose that $||\A||=m$.  Let $c'$ be a $k$-coloring of ${\V \choose \A}$.  We may define a $k$-coloring $c''$ of all finite increasing $m$-tuples from $\V$ with the property that for any $\A' \in {\V \choose \A}$, we define $c''(\ov{a}') = c'(\A')$, where $\ov{a}'$ is the increasing enumeration of $\A'$.  It suffices to find a copy $\B'$ of $\B$ in $\V$ that is homogeneous for $c''$ on increasing copies of $\A$, i.e. such that for all increasing copies $\doh_1,\doh_2$ of $\A$ in $\B'$, $c''(\doh_1) = c''(\doh_2)$.

Define a $k$-coloring $c$ on increasing $m$-tuples $\so$ from $\W$ by $c(\so) = c''(f(\so))$.
By assumption, there is an increasing function $g: \B \raw \W$ that is color-homogenizing for $c$ and $\B$.

Let $\B' = (f\circ g)(\B)$.  By the assumption that $f\circ g$ is an embedding, $\B' \cong \B$.  To complete the argument that $\V$ has RP, it suffices to show that $\B'$ is homogeneous for $c''$ on increasing copies of $\A$.
Let $\doh_1,\doh_2$ be increasing copies of $\A$ in $\B'$, thus, $\doh_1 \sim_\V \doh_2$.  The embedding $f \circ g$ is order-preserving and surjective onto $\B'$.  Thus, we may define increasing tuples $\ii = {(f\circ g)}^{-1}(\doh_1)$ and $\jj ={(f\circ g)}^{-1}(\doh_2)$ from $\B$.  Since $\doh_1 \sim_\V \doh_2$ and $f\circ g$ is an embedding, we have that $\ii \sim_\V \jj$.  Since $\ii$ and $\jj$ are increasing tuples from $\B$ and $\ii \sim_\V \jj$, $c(g(\ii))=c(g(\jj))$, by the fact that $g$ is color-homogenizing for $c$ and $\B$.  By definition of $c$, $c''(f(g(\ii)))=c''(f(g(\jj)))$, i.e., $c''(\doh_1)=c''(\doh_2)$.
\end{proof}

\section{semi-direct product structures}

In this section, we focus on ordered relational structures.  In the following definition, it is convenient to assume that the relation symbol for order is common to the structures.  If this is not the case for certain desired input structures, we assume that we make it the case before applying the definition (as in Definition \ref{notn}).  The following operation is the ``disjoint sum'' operation on structures, \citep[p.101]{ho93} plus the requirement that $\prec$ be extended to a total order on the sum.



\begin{dfn}\label{Udef} Given a linear order $\Oo = (|\Oo|,\prec)$ 
and structures $(\M_i)_{i \in \Oo}$ on pairwise-disjoint domains $|\M_i|$ in relational signatures $L_i$, each linearly ordered by $\prec \in L_i$, define $\U=\U_{i \in \Oo}(\M_i)$ to be the structure on $\bigcup_{i \in \Oo} |\M_i|$ in the signature 
\begin{eqnarray}
\displaystyle \sigma(\U) = \{P_\alpha \mid \alpha \in \Oo\} \cup \bigcup_{i \in \Oo} L_i   \nonumber
\end{eqnarray}
\noindent for new unary predicates $P_\alpha$, where the symbols are interpreted as follows.

\begin{enumerate}[label=(\roman*)]
\item 
For each $n$-ary relation symbol $R_\ell$ that is not $\prec$ or any of the $P_\alpha$, 
$R_\ell^\U = \bigcup_{i \in \Oo} X_i$, where we define

$$X_i = 
\begin{cases}
R_\ell^{\M_i} &, \textrm{~if~} R_\ell \in L_i\\
\emptyset &, \textrm{~if~} R_\ell \notin L_i  
\end{cases}
$$
\item $a \prec^{\U}b$ if and only if there exist $i, j \in \Oo$ such that $a \in \M_i, b \in \M_j$ and either $i \prec^{\Oo} j$ or else $i=j$ and $a \prec^{\M_i} b$,
\item $P^\U_\alpha = |\M_\alpha|$.
\end{enumerate}
\end{dfn}

\begin{obs} \label{unique}
\begin{enumerate}
\item $\U_{i \in \Oo}(\M_i)$ in Definition \ref{Udef} is linearly ordered by $\prec$. 
\item Given a finite substructure $\A \subseteq \U_{i \in \Oo}(\M_i)$, there is a unique integer $s \geq 1$, a unique finite sequence $t_0 \prec \ldots \prec t_{s-1}$ from $\Oo$, and unique substructures $\A_i \subseteq \M_{t_i}$, for all $i<s$, such that $\A = \bigcup_{i<s} \A_i$.
\end{enumerate}
\end{obs}


\begin{prop}\label{AP} If $\age(\M_i)$ has AP, for all $i \in \Oo$, then $\age(\U)$ has AP.
\end{prop}

\begin{proof} Since the language is relational, we may allow empty structures in order to simplify our argument.

Fix an amalgamation problem in $\age(\U)$, $f_1 : \A \raw \B, f_2: \A \raw \C$.  We may assume that all $\A, \B, \C \subseteq \U$.  Since the language is relational, structures $\A, \B, \C$ are finite.

By Observation \ref{unique}(2), there exist unique increasing tuples $\ov{s}=(s_i)_{i<n}, \ov{t}=(t_j)_{j<m}$ from $\Oo$ and substructures $\B(i) \subseteq \M_{s_i}, \C(j) \subseteq \M_{t_j}$ such that $\B = \bigcup_{i<n} \B(i)$, $\C = \bigcup_{j<m} \C(j)$.  Let $Y=\ran \ov{s}, Z= \ran \ov{t}$, and $X= Y \cup Z$.  For $k \in X$, define $\B_k =\B(i)$, if $k=s_i$, and otherwise $\B_k = \emptyset$.  Likewise, define $\C_k =\C(j)$, if $k=t_j$, and otherwise $\C_k = \emptyset$.  Define $\A_k = \A \cap \M_k$, for all $k \in X$.

For each $k \in X$, we define a structure $\D_k \in \age(\U)$ and embeddings $g_1^k : \B_k \raw \D_k, g_2^k: \C_k \raw \D_k$ as follows.  

If $k \in Y \setminus Z$, then $k=s_i$ for some unique $i<n$, and we let $g_1^k: \B_k \raw \B_k$ be the identity function, $\D_k =\B_k$ and $g_2^k = \emptyset$.

If $k \in Z \setminus Y$, then $k=t_j$ for some unique $j<m$, and we let $g_2^k: \C_k \raw \C_k$ be the identity function, $\D_k =\C_k$ and $g_1^k = \emptyset$.

If $k \in Y \cap Z$, then $k=s_i=t_j$ for some unique $i<n, j<m$. 
This is the only case where $\A_k$ could possibly be nonempty, since embeddings must preserve the predicates $P_\alpha$, for all $\alpha \in \Oo$.
%
The restrictions of $f_1, f_2$, respectively, $f_1^k : \A_k \raw \B_k$, $f_2^k : \A_k \raw \C_k$, form an amalgamation problem in $\age(\M_k)$.  By assumption, there is a solution to the amalgamation problem $g_1^k, g_2^k, \D_k$ such that $g_1^k \circ f_1^k = g_2^k \circ f_2^k$.

Define $g_1= \bigcup_{i<n} g_1^i$ and $g_2= \bigcup_{j<m} g_2^j$.
For each $k \in X$,  $g_1^k$ and $g_2^k$ are $L_k$-embeddings on substructures of $\M_k$, and so $g_1$ and $g_2$ preserve $\prec$ and the $P_\alpha$, for all $\alpha \in \Oo$, and thus are $\sigma(\U)$-embeddings.
Let $\D = \bigcup_{k \in X} \D_k$.
It is not hard to check that $\D$, $g_1$, $g_2$ is a solution to the amalgamation problem such that $g_1 \circ f_1 = g_2 \circ f_2$.

\end{proof}

Here we restate the product Ramsey theorem for classes.  By the notation ${(\B_i)_{i<s} \choose (\A_i)_{i<s}}$ we mean all 
sequences $(\A_i')_{i<s}$ such that $\A_i' \subseteq \B_i$ and $\A_i' \cong \A_i$, for every $i< s$.

\begin{thm}[{\citep[Theorem 2]{sok211}}]\label{sokprod} Fix integers $r, s \geq 1$ and let $(\K_i)_{i<s}$ be a sequence of classes of finite structures with RP.  Fix $(\B_i)_{i<s}, (\A_i)_{i<s}$ such that $\B_i, \A_i \in \K_i$, for all $i<s$.  There exist $\C_i \in \K_i$ for all $i<s$ such that for any coloring $p: {(\C_i)_{i<s} \choose (\A_i)_{i<s}} \raw r$, there exists a sequence $(\B_i')_{i<s}$, with $\B_i' \cong \B_i$ for all $i<s$  and some $l \in r$ such that $p$ restricted to ${(\B_i')_{i<s} \choose (\A_i)_{i<s}}$ is the constant function $l$.
\end{thm}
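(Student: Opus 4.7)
The plan is to prove Theorem \ref{sokprod} by induction on $s$, the number of classes. The base case $s=1$ is exactly the Ramsey property (Definition \ref{rpDef}) for $\K_1$, since ${(C_i)^1_{i=1} \choose (A_i)^1_{i=1}}$ reduces to ${C_1 \choose A_1}$. The inductive step will use the standard product trick: lift an $r$-coloring of length-$s$ sequences to a coloring of the last coordinate alone with a larger but still finite set of colors, apply single-class RP on $\K_s$ to homogenize that coordinate, and then apply the inductive hypothesis to what survives on the first $s-1$ coordinates.

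More concretely, assume the theorem holds for $s-1$ and fix $r$, $(A_i)^s_{i=1}$, $(B_i)^s_{i=1}$. First invoke the inductive hypothesis with $r$ colors on $(\K_i)^{s-1}_{i=1}$ to obtain $C_i \in \K_i$ for $i=1,\ldots,s-1$ witnessing the product Ramsey property at parameter $r$. Let $N = \left|{(C_i)^{s-1}_{i=1} \choose (A_i)^{s-1}_{i=1}}\right|$, which is finite because each $C_i$ is finite. Now apply RP for $\K_s$ with $r^N$ colors to obtain $C_s \in \K_s$ such that every $r^N$-coloring of ${C_s \choose A_s}$ admits a copy of $B_s$ homogeneous on copies of $A_s$. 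Given any coloring $p: {(C_i)^s_{i=1} \choose (A_i)^s_{i=1}} \raw \{1,\ldots,r\}$, define a meta-coloring $\hat{p}$ on ${C_s \choose A_s}$ by sending $A_s'$ to the function $(A_i')^{s-1}_{i=1} \mapsto p((A_1',\ldots,A_{s-1}',A_s'))$; its codomain has size $r^N$. Homogenize $\hat{p}$ on some $B_s' \cong B_s$ inside $C_s$ to extract a single function $q: {(C_i)^{s-1}_{i=1} \choose (A_i)^{s-1}_{i=1}} \raw \{1,\ldots,r\}$ such that every $A_s' \in {B_s' \choose A_s}$ induces this same $q$. Finally, apply the inductive hypothesis to $q$ to produce $(B_i')^{s-1}_{i=1}$ monochromatic of color $\ell$; the full sequence $(B_i')^s_{i=1}$ is then monochromatic for $p$ of color $\ell$.

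The main obstacle is bookkeeping in the correct order of quantification: the $(C_i)^{s-1}_{i=1}$ must be fixed before $N$, and hence the number of colors needed on $\K_s$, can even be named, and only then can one apply single-class RP for $\K_s$ to produce $C_s$; reversing this order would make the induction circular. A secondary check is that each $C_i$ produced lies in the prescribed class $\K_i$, which is automatic from the definition of RP and from the inductive hypothesis. Beyond these routine points the argument has no genuinely new combinatorial content; it is the standard iteration of single-dimensional Ramsey statements adapted to the substructure notion ${B \choose A}$.
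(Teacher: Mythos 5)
The paper does not prove Theorem \ref{sokprod}; it is imported verbatim from \cite{sok211} as a black box, so there is no internal proof to compare against. Your induction is the standard product-Ramsey argument and it is correct: the base case is literally Definition \ref{rpDef}, and in the inductive step you fix $C_1,\ldots,C_{s-1}$ first, only then compute $N$ and invoke RP for $\K_s$ with $r^N$ colors, which is exactly the quantifier order needed (and the only place the argument could go wrong). The unwinding at the end --- $p(A_1',\ldots,A_s')=\hat p(A_s')\bigl((A_i')_{i<s}\bigr)=q\bigl((A_i')_{i<s}\bigr)=\ell$ for all $A_i'\in{B_i'\choose A_i}$ --- checks out, since ${(B_i')_{i=1}^{s-1}\choose (A_i)_{i=1}^{s-1}}\subseteq{(C_i)_{i=1}^{s-1}\choose (A_i)_{i=1}^{s-1}}$. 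This is essentially the same iteration-of-meta-colorings proof one finds in the cited source, so you have supplied a correct, self-contained justification for a result the paper only cites.
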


\begin{cor}\label{prod} If $\M_i$ has RP, for all $i \in \Oo$, then $\U_{i \in \Oo}(\M_i)$ has RP.  
\end{cor}

\begin{proof} Let $\U=\U_{i \in \Oo}(\M_i)$. It suffices to show that $\U$ has the $\A$-Ramsey property for all finite $\A \subseteq \U$.  Fix two structures $\A \subseteq \B$.  By Observation \ref{unique}(2), there is a unique decomposition $\B = \bigcup_{i<s} \B_i$ where each $\B_i \subseteq \M_{t_i}$, for some $t_i \in \Oo$, for all $i<s$.   We may write $\A = \bigcup_{i<s} \A_i$ where $\A_i \subseteq \M_{t_i}$, if we allow some of the $\A_i$ to be empty.  Now apply Theorem \ref{sokprod} to $(\K_{t_i})_{i<s}$, $(\A_i)_{i<s}$ and $(\B_i)_{i<s}$.
\end{proof}


We generalize Definition \ref{Udef}.

\begin{dfn}[semi-direct product structures]\label{sdps} Given a structure $\N$ in a relational signature $L_2$ linearly ordered by $\prec \in L_2$ and, for some relational signature $L_1$ such that $L_1 \cap L_2 = \{\prec\}$,
$L_1$-structures $(\M_i)_{i \in \N}$ on pairwise-disjoint domains $|\M_i|$, each linearly ordered by $\prec$,
define
$\I=\I_{i \in \N}(\M_i)$ to be the structure on $\bigcup_{i \in \N} |\M_i|$ in the signature
$$\sigma(\I) = L_1 \cup L_2 \cup \{E\}$$
\noindent for a new binary relation $E$, where the symbols are interpreted as follows.

\begin{enumerate}[label=(\roman*)]
\item For each $n$-ary relation symbol $S_\ell \in L_1 \setminus \{\prec\}$, define
$$S_\ell^\I = \bigcup_{i \in \N} S_\ell^{\M_i} .$$
\item For each $n$-ary relation symbol $R_l \in L_2 \setminus \{\prec\}$, define 
	$R_l^{\I}(a_0,\ldots,a_{n-1})$ to hold if and only if there exist (possibly non-distinct) elements $\{t_0,\ldots, t_{n-1}\}$ from $\N$ such that $a_i \in \M_{t_i}$ for all $i<n$ and $\N \vDash R_l(t_0,\ldots,t_{n-1})$.
\item Define $E$ to be an equivalence relation whose equivalence classes are exactly the $|\M_i|$, i.e., $E^\I(a,b) \Leftrightarrow (\exists i \in \N)(a, b \in \M_i)$.
\item Define $a\prec^{\I}b$ if and only if there exist $i, j \in \N$ such that $a \in \M_i, b \in \M_j$ and either $i \prec^\N j$ or else both $i=j$ and $a \prec^{\M_i} b$.
\end{enumerate}
\end{dfn}

\begin{obs}\label{sigma}
\begin{enumerate}
\item $\I=\I_{i \in \N}(\M_i)$ in Definition \ref{sdps} is linearly ordered by $\prec$.
\item Let $\I^-$ be the $L_2$-reduct of $\I$.  For $R_l \in L_2 \setminus \{\prec\}$, the definable sets $R_l^\I$ are $E^\I$-invariant.  Moreover, $\prec^\I$ is $E^\I$-invariant on pairs $(a,b) \notin E^\I$. Thus, we may form the quotient structure $(\I^-)/E$, which is an $L_2$-structure.
\item Define a map $\sigma : {(\I^-)/E} \raw \N$ by $[a]/E \mapsto t$, if $a \in \M_t$.  It is clear that this map is well-defined, bijective, and preserves the interpretations of all symbols in $L_2$.  Thus $\sigma$ is an $L_2$-isomorphism.
\end{enumerate}
\end{obs}

\begin{dfn}\label{forthm} Let $\N, L_1, L_2, \{\M_i\}_{i \in \N}, \I=\I_{i \in \N}(\M_i)$ be as in Definition \ref{sdps}, and let $\N'$ be the $\{\prec\}$-reduct of $\N$.  Let $f_E : \I \raw (\I^-)/E$ be the map that takes $a$ to its equivalence class $[a]/E$.

\begin{enumerate}
\item For a substructure $\A \subseteq \I_{i \in \N}(\M_i)$, define $\gr(\A)$ to be the $L_2$-substructure of $\N$ identified with $f_E(\A)$ in Observation \ref{sigma}(3).

We call $\gr(\A)$ the \emph{underlying graph} of $\A$.  
\item For a tuple $\nub$ from $\I_{i \in \N}(\M_i)$, by $\gr(\nub)$ we mean $\gr(\textrm{ran~}{\nub})$.   
\item Given two substructures $\A, \B \subseteq \I$ and a $\{\prec,E\}$-embedding $f: \A \raw \B$, define $\ov{f} : \gr(\A) \raw \gr(\B)$ to be the $\{\prec\}$-embedding given by $\ov{f}( \sigma([a]/E) ) = \sigma([f(a)]/E)$, where $\sigma$ is the function named in Observation \ref{sigma}(3).
\item For a substructure $\C \subseteq \I_{i \in \N}(\M_i)$, by $\C^{\red}$ we mean the $(L_1 \cup \{E\})$-reduct of $\C$.  For $\C\subseteq \U_{i \in \N'}(\M_i)$, by $\C^{\red}$ we mean the $(L_1 \cup \{E\})$-reduct of $\C$ as it is naturally interpreted, meaning the sets $P_\alpha^\C$, for $\alpha \in \N'$, are defined to be exactly the $E^{\C^{\red}}$-equivalence classes.
\end{enumerate}
\end{dfn}

\begin{obs} $|\gr(\A)| = \{t \in |\N| \mid |\A| \cap |\M_t| \neq \emptyset \}$.  We use this notation to point out similarities with the partite construction in \cite{rone89}.  There are also similarities with the argument in \citep[Proposition 1]{sok111}.
\end{obs}

\begin{prop}\label{red}  Let $\N, L_1, L_2, \{\M_i\}_{i \in \N}, \I=\I_{i \in \N}(\M_i)$ be as in Definition \ref{sdps}.  Let $\N'$ be the $\{\prec\}$-reduct of $\N$, and let $\U = \U_{i \in \N'}(\M_i)$.

Fix $\ov{a}, \ov{b} \in \leftexp{m}{|\I|} (= \leftexp{m}{|\U|})$ such that
\begin{eqnarray}\label{11}
\qftp^{\I^{\red}}(\ov{a})= \qftp^{\I^{\red}}(\ov{b}) \label{gr1}
\end{eqnarray}
and let $f$ be the map defined by $f(a_i)= b_i$ for all $i<m$.
Then
\begin{enumerate}[label=(\roman*)]
\item\label{help1} $\ov{f}: \gr(\ov{a}) \raw \gr(\ov{b})$ is the identity function  if and only if $\qftp^{\U}(\ov{a})= \qftp^{\U}(\ov{b})$, and
\item\label{help2} $\ov{f}: \gr(\ov{a}) \raw \gr(\ov{b})$ is an isomorphism  if and only if $\qftp^{\I}(\ov{a})= \qftp^{\I}(\ov{b})$.
\end{enumerate}
\end{prop}

\begin{proof}  Recall that $P_t^\I = |\M_t|$, for all $t \in \N'$.  By  assumption  \eqref{11}, there exist $q \in \omega$ and sequences $(s_i)_{i<q}, (t_i)_{i<q}$ such that for all $j<m$, for all $i<q$: $a_j \in |\M_{s_i}| \Leftrightarrow b_j \in |\M_{t_i}|$.  In other words, $a_j \in P_{s_i}^\U \Leftrightarrow b_j \in P_{t_i}^\U$.  

To see \ref{help1}, note that under  assumption  \eqref{11}, $\qftp^{\U}(\ov{a})= \qftp^{\U}(\ov{b})$ holds if and only if $s_i = t_i$, for all $i<q$.
However, $\ov{f}: \gr(\ov{a}) \raw \gr(\ov{b})$ is the identity function if and only $E^\I(a_j,b_j)$ for all $j<m$, which holds if and only if $s_i=t_i$, for all $i<q$.

To see \ref{help2}, note that under assumption  \eqref{11},  $f$ is a $\sigma(\I)$-isomorphism if and only if $f$ is an $L_2$-embedding.  It remains to show that $f$ is an $L_2$-embedding if and only if $\ov{f}$ is an $L_2$-embedding.

Fix an $n$-ary relation symbol $R_l \in L_2 \setminus \{\prec\}$.
By the definition of $\I$, $R_l^\I(a_{i_0},\ldots,a_{i_{n-1}}) \Leftrightarrow R_l^\N(s_{i_0},\ldots,s_{i_{n-1}})$, for all $i_0, \ldots, i_{n-1} < m$.  Similarly, $R_l^\I(b_{i_0},\ldots,b_{i_{n-1}}) \Leftrightarrow R_l^\N(t_{i_0},\ldots,t_{i_{n-1}})$, for all $i_0, \ldots, i_{n-1} < m$. 
Moreover, $\ov{f}$ is an $L_2$-embedding if and only if $R_l^\N(s_{i_0},\ldots,s_{i_{n-1}}) \Leftrightarrow R_l^\N(t_{i_0},\ldots,t_{i_{n-1}})$, for all $n$-ary $R_l \in L_2$, $i_0, \ldots, i_{n-1} <m$ and $n \in \omega$.  Thus, $f$ is an $L_2$-embedding if and only if $\ov{f}$ is an $L_2$-embedding.
\end{proof}

We give a slight restatement of RP that we use in the proof of Theorem \ref{thm3}.

\begin{dfn} Given a finite substructure $\B$ of some structure $\V$,
define a $k$-coloring of $ {\V \choose \age(\B)}$ to be a $k$-coloring of $ \bigcup_{\A \in \age(\B)} {\V \; \choose \A}$
\end{dfn}

\begin{prop}\label{thesis} If $\V$ has RP, then for any finite substructure $\B \subseteq \V$, for any integer $k \geq1$ and $k$-coloring $c$ of ${\V \choose \age(\B)}$, there is a copy $\B'$ of  $\B$ in $\V$ such that $\B'$ is homogeneous for $c$ on copies of $\A$, for all $\A \in \age(\B)$.  As a generalization of our usual convention we say that $\B'$ is \textbf{homogeneous for $c$}.
\end{prop}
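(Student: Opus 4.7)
The plan is to reduce the multi-target statement to a finite iteration of the single-target RP in Definition \ref{rpDef}. Since $B_0$ is finite, $\Sub(B_0)$ is a finite set, so I would enumerate it as $S_1,\ldots,S_n$ and then carry out two nested inductions.

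First, I would build a tower of RP witnesses inside $\age(A)$. Set $C_0 := B_0$, and assuming $C_{i-1} \in \age(A)$ has been chosen, apply RP with parameters $(S_i, C_{i-1}, k)$ (playing the roles of $A, B, k$ in Definition \ref{rpDef}) to produce $C_i \in \age(A)$ such that any $k$-coloring of ${C_i \choose S_i}$ admits a copy of $C_{i-1}$ in $C_i$ homogeneous for that coloring. After $n$ steps we have $C_n \in \age(A)$, so we may realize $C_n$ as a substructure of $A$ and rename so that $C_n \subseteq A$.

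Second, I would peel the tower from the top down. The given $c$ restricts to $k$-colorings $c_i := c \uphp {A \choose S_i}$ for each $i$. Starting with $C_n \subseteq A$, apply $c_n$ to ${C_n \choose S_n}$ to extract $D_{n-1} \subseteq C_n$ with $D_{n-1} \cong C_{n-1}$ homogeneous for $c_n$. Next apply $c_{n-1}$ to ${D_{n-1} \choose S_{n-1}}$ to extract $D_{n-2} \subseteq D_{n-1}$ with $D_{n-2} \cong C_{n-2}$ homogeneous for $c_{n-1}$, and continue; at stage $j$ the isomorphism $D_{n-j+1} \cong C_{n-j+1}$ supplies the RP witness needed to produce $D_{n-j}$. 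After $n$ peeling steps set $B_0' := D_0 \cong C_0 = B_0$.

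The only subtle point, which is not really an obstacle, is checking that homogeneity on copies of $S_i$ survives restriction to smaller substructures $D_j \subseteq D_{j+1}$ with $j<i$. This is immediate because ${D_j \choose S_i} \subseteq {D_{j+1} \choose S_i}$, so if all copies of $S_i$ in $D_{j+1}$ share a $c_i$-value then the same holds in $D_j$. Hence $B_0'$ is simultaneously $c_i$-homogeneous for every $i$, as required. The order in which $S_1,\ldots,S_n$ is enumerated plays no role since each peeling step only needs to preserve the homogeneities established at earlier stages.
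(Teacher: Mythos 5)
Your proposal is correct and is essentially the paper's own argument: the paper likewise builds a tower $Z_1 = B_0$, $Z_n \rightarrow (Z_{n-1})^{D_{n-1}}_k$ and then peels it from the top, extracting successively smaller homogeneous copies until a copy of $B_0$ homogeneous for all $S \in \Sub(B_0)$ remains. Your explicit remark that homogeneity survives restriction to substructures is the same (implicit) observation the paper relies on.
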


\begin{proof} This is well-known (see Claim 4.16 in \cite{sc12}) and can also be argued for using $\V$-indexed indiscernible sets. We repeat the argument here.  List $\age(\B) = \{\D_1,\ldots,\D_{m}\}$.  Let $c$ be a $k$-coloring of  $ {\V \choose \age(\B)}$ .  Define structures $(\W_i \mid 1 \leq i \leq m+1)$ such that $\W_1 =\B$ and $\W_{n} \raw (\W_{n-1})^{\D_{n-1}}_k$, for all $n$ such that $2 \leq n \leq m+1$.  Now define $\V_1=\W_{m+1}$.  Having defined $\V_{n-1}$, define $\V_n$ to be a copy of $\W_{m-(n-2)}$ in $\V_{n-1}$ homogeneous for $c$ on copies of $\D_{m-(n-2)}$, for all $n$ such that $2 \leq n \leq m+1$.  Thus $\V_1 \supseteq \V_2 \supseteq \cdots \supseteq \V_{m+1}$ and ultimately $\V_{m+1}$ is a copy of $\W_1 = \B$ homogeneous for $c$ on copies of $\D_m, \D_{m-1}, \ldots, \D_1$.
\end{proof}

\begin{thm}\label{thm3} Let $\N, L_1, L_2, \{\M_i\}_{i \in \N}, \I=\I_{i \in \N}(\M_i)$ be as in Definition \ref{sdps}.  Let $\N'$ be the $\{\prec\}$-reduct of $\N$ and let $\U = \U_{i \in \N'}(\M_i)$.  Assume that there is an age, $\K$, that has RP and such that $\age(\M_i) = \K$, for all $i \in \N$.

If $\N$ has RP, then $\I$ has RP.
\end{thm}

\begin{proof} 
The structures $\I$, $\U$ share their underlying set which we call $X$.  
Define $f:|\U| \raw |\I|$ to be the identity map on underlying sets.  By the interpretation of $\prec$ on both structures, $f$ is an increasing function. 

To simplify notation, we adopt the convention that given any finite substructures
$\C_1 \subseteq \I$ and $\C_2 \subseteq \U$ and a bijection $p: \C_1 \raw \C_2$,
we say that $p$ is a $\sigma(\I)$-isomorphism if $f \circ p$ is truly a $\sigma(\I)$-embedding.

Fix some integers $m, k \geq 1$, a finite substructure $\B \subseteq \I$ (which we may assume to be of cardinality at least $m$),  and a $k$-coloring $c$ on increasing $m$-tuples from $\U$.  Representatives of isomorphism types of cardinality-$m$ substructures of $\B$ may be listed as:
$$\A_0,\ldots,\A_{t-1}$$
for some $t \in \omega$.

Let $\Hh = \gr(\B)$, and let $d = k^t$.  
By the assumption that $\N$ has RP and Proposition \ref{thesis}, there is some finite substructure $\N_0 \subseteq \N$ such that
\begin{eqnarray}\label{N}
\N_0 \raw (\Hh)^{\age(\Hh)}_d
\end{eqnarray}
For every $\Hh_0 \subseteq \N_0$ such that $\Hh_0 \cong_{L_2} \Hh$, there exists a finite substructure $\B_0 \subseteq \U$ and an $(L_1 \cup \{E\})$-isomorphism $\tau : \B_0^{\red} \raw \B^\red$ such that $\gr(\B_0)=\Hh_0$ and $\ov{\tau}: \Hh_0 \raw \Hh$ is an $L_2$-isomorphism.  This is because $\age(\M_i) = \K$, for every $i \in \N$, and $\sigma(\U)$ is relational.



Simply by being an age, $\age(\U)$ has JEP, and so there exists a finite structure $\Bb \subseteq \U$ that embeds each of the finitely many $\B_0$ described in the paragraph immediately above.  We may additionally assume that this structure $\Bb$ has the property that $\gr(\Bb) = \N_0$.

\

We may define an $m$-coloring $c^*$ of ${\U \choose \age(\Bb)}$ with the property that
for any $\A' \in {\U \choose \age(\Bb)}$ such that $||\A'||=m$, $c^*(\A') = c(\ov{a}')$, where $\ov{a}'$ is the increasing enumeration of $\A'$.

By Corollary \ref{prod}, $\U_{i \in \N'}(\M_i)$ has RP, so by Proposition \ref{thesis}, there is a copy $\Bb^*$ of $\Bb$ in $\U$ such that $\Bb^*$ is homogeneous for $c^*$.  By Proposition \ref{red}(i),
$\gr(\Bb^*)=\N_0=\gr(\Bb)$.

Making use of Observation \ref{equiv}, we define a $d$-coloring $c': {\N_0 \choose \age(\Hh)} \raw \leftexp{t}{k}$ that maps into a set of size $d$ as follows: for an $L_2$-structure $\J \subseteq \N_0$ such that $\J \in \age(\Hh)$, define $c'(\J) = (k_0,\ldots,k_{t-1})$
where we define, for any $i<t$,

$$k_i=
\begin{cases}
0 		&, \textrm{~if~} \gr(\A_i) \not\cong_{L_2} \J \\
c^*(\D) 	&, \textrm{~if~} \gr(\A_i) \cong_{L_2} \J \textrm{~and there exists~} \D \subseteq \Bb^* \textrm{~such that~} {\D}^{\red} \cong {\A_i}^{\red} \\
& \hspace{3.75in} \textrm{~and~} \gr(\D) = \J
\end{cases}
$$

To see that $c'$ is well-defined, consider structures $\D, \hat{\D} \subseteq \Bb^*$ such that $\gr(\D) = \gr(\hat{\D})$ and there exists an isomorphism
$h: \D^\red \raw {\hat{\D}}^\red$.
The map $\ov{h}: \gr(\D) \raw \gr(\hat{\D})$ is a $\{\prec\}$-embedding and $\gr(\D)$ is finite, so $\ov{h}$ is the identity map.  By Proposition \ref{red}(i), $h: \D \raw \hat{\D}$ is a $\sigma(\U)$-isomorphism, and so by homogeneity of $\Bb^*$ for $c^*$, $c^*(\D) = c^*(\hat{\D})$.

By line \eqref{N}, there is a copy $\Hh^*$ of $\Hh$ in $\N_0$ such that $\Hh^*$ is homogeneous for $c'$.

Now we refer to the construction of $\Bb$ and the fact that $\Bb^* \cong \Bb$.  Since $\Hh^* \subseteq \N_0$ has the property that $\Hh^* \cong_{L_2} \Hh$, there exists a finite substructure $\B^* \subseteq \Bb^*$ and an $(L_1 \cup \{E\})$-isomorphism $\tau : {\B^*}^{\red} \raw \B^{\red}$ such that $\gr(\B^*)=\Hh^*$ and $\ov{\tau}: \Hh^* \raw \Hh$ is an $L_2$-isomorphism.

We apply Theorem \ref{thm2} where $g$ is defined to be $\tau^{-1}$.
By Proposition \ref{red}(ii), since $\ov{\tau}: \Hh^* \raw \Hh$ is an $L_2$-isomorphism, $\tau : \B^* \raw \B$ is  a $\sigma(\I)$-isomorphism.  Thus,
\begin{eqnarray}\label{L2}
g: \B \raw \B^* \textrm{~is a~} \sigma(\I)\textrm{-isomorphism,}
\end{eqnarray}
which by our convention means that $f \circ g : \B \raw \I$ is a $\sigma(\I)$-embedding.

To see that $g$ is a color-homogenizing map for $c$ and $\B$, fix any increasing $m$-tuples $\ov{a}, \ov{a}'$ from $\B$ such that  $\ov{a} \sim_\I \ov{a}'$.  It remains to show that $c(g(\ov{a})) = c(g(\ov{a}'))$.

Since $\ov{a}$ and $\ov{a}'$ are $m$-tuples from $\B$, there is some $s<t$ such that $\ov{a}$ and $\ov{a}'$ are increasing copies of $\A_s$.
Since $\ov{a}$ and $\ov{a}'$ are increasing tuples and $g$ is a $\{\prec\}$-embedding, $g(\ov{a}), g(\ov{a}')$ are also increasing tuples.  Define substructures $\D, \D' \subseteq \U$ such that $g(\ov{a})$ is the increasing enumeration of $\D$ and $g(\ov{a}')$ is the increasing enumeration of $\D'$.  
Since $\ov{a}$ and $\ov{a}'$ are from $\B$, $g(\ov{a})$ and $g(\ov{a}')$ are from $\B^* \subseteq \Bb^*$.  Since $\Bb^* \cong_{\sigma(\U)} \Bb$, both $\D$ and $\D'$ are structures of cardinality $m$ in ${\U \choose \age(\Bb)}$, and so $c^*(\D) = c(g(\ov{a}))$ and $c^*(\D') = c(g(\ov{a}'))$, by definition.  Thus, it remains to show that $c^*(\D) = c^*(\D')$.

Let $\J = \gr(\D)$ and $\J'=\gr(\D')$.  By line \eqref{L2}, $g(\ov{a}) \sim_\I \ov{a} \sim_\I \ov{a}' \sim_\I g(\ov{a}')$.
This yields that $\D^\red \cong {\A_s}^\red \cong {\D'}^\red$ and $\J = \gr(\D) \cong \gr(\A_s) \cong \gr(\D') = \J'$, by Proposition \ref{red}(ii).
Since $\Hh^*$ is homogeneous for $c'$ and $\J, \J' \subseteq \gr(\B^*)=\Hh^*$, $c'(\J) = c'(\J')$.  In particular, $c^*(\D) = (c'(\J))_s = (c'(\J'))_s = c^*(\D')$, as desired.
\end{proof}


\section{applications}

Theorem \ref{thm3} yields interesting examples, some of which are familiar.  First we introduce a definition that is well-defined up to bi-definability of structures.

\begin{dfn}\label{notn}  Given an ordered relational structure $\M$ and an order $\Oo=(|\Oo|,\prec)$, by $\U_{i \in \Oo}(\M)$, we mean $\U_{i \in \Oo}(\M_i)$ where $(\M_i)_{i \in \Oo}$ is a sequence of isomorphic copies of $\M$ on pairwise-disjoint domains and the symbol $\prec$ has been substituted for the symbol for order in $\sigma(\M)$.

Given two ordered relational structures $\N$ and $\M$, $\I_{i \in \N}(\M)$ is defined to be $\I_{i \in \N}(\M_i)$ where $(\M_i)_{i \in \N}$ is a sequence of isomorphic copies of $\M$ on pairwise-disjoint domains, the symbol $\prec$ has been substituted for the symbols for order in both $\sigma(\N)$ and $\sigma(\M)$, and by additional substitution of symbols, we have made $\sigma(\N) \cap \sigma(\M) = \{\prec\}$.
\end{dfn}


\begin{cor}\label{rpCor} Let $\Oo = (|\Oo|,\prec)$ be a linear order and $\N$ a random ordered graph.  If $\M$ is an ordered relational structure with RP then
\begin{enumerate}
\item $\I_{i \in \Oo}(\M)$ has RP
\item $\I_{i \in \N}(\M)$ has RP
\end{enumerate}
\end{cor}

\begin{proof}  By Theorem \ref{rgRP}, $\N$ has RP.  That $\Oo$ has RP follows from Ramsey's theorem for finite sequences.  Thus $\I_{i \in \Oo}(\M)$ and $\I_{i \in \N}(\M)$ have RP, by Theorem \ref{thm3}.
\end{proof}

\begin{rmk}  Corollary \ref{rpCor}(1) is obtained by Leeb using the notation Ord($\mathcal{\C}$) in \cite{le73} (see \cite{graro74} for a discussion).  
\end{rmk}

\begin{cor}\label{examples}
\begin{enumerate}
\item Let $\Oo= (\omega,<)$.  Then $\I_{i \in \Oo}( \Oo)$ has RP.  
\item If $\N$ is the random ordered graph in the signature $\{R, \prec\}$, then $\I_{i \in \N}(\N)$ has RP.
\end{enumerate}
\end{cor}

\begin{proof} Note that $\I_{i \in \Oo}( \Oo )$ is isomorphic to the structure $\I_\eq$ defined in Definition \ref{trees} (assuming $\{E, \prec\}$ is the common signature) which structure is known to have RP. Alternatively we could use Ramsey's theorem for finite sequences, which guarantees that $\Oo$ has RP, so that we may apply Corollary \ref{rpCor}.  For the second claim, we use Theorem \ref{rgRP} to conclude that $\N$ has RP, and thus $\I_{i \in \N}(\N)$ has RP by Corollary \ref{rpCor}(2).
\end{proof}

\begin{rmk}\label{xhelp}  Let $\R = \I_{i \in \N}(\N)$ where $\N$ is the random ordered graph in the signature $\{R,\prec\}$.  By Definition \ref{notn}, $\sigma(\R) = \{R_1, R_2, E, \prec\}$ where we may assume $R_1$ is substituted for the edge relation symbol in $L_1$ and $R_2$ is substituted for the edge relation symbol in $L_2$, where $L_1$ and $L_2$ are as in Definition \ref{sdps}.  We may define an interpretation of $R$ on $\R$ such that $R^\R = R_1^\R \cup R_2^\R$.  Note that for all $a, b \in \R$, $\R \vDash R_1(a,b) \Leftrightarrow \R \vDash R(a,b) \wedge E(a,b)$ and $\R \vDash R_2(a,b) \Leftrightarrow \R \vDash R(a,b) \wedge  \neg E(a,b)$, the latter since $R^\N$ is irreflexive.  Thus, in this case, $\R$ is interdefinable with an $\{R, E, \prec\}$-structure on the same underlying set.
\end{rmk}

\begin{dfn} A theory $T$ has the \emph{independence property (IP)} if there is a
partitioned formula $\varphi(\ov{x}; \ov{y})$ in the language of the theory with the following
property: for every $n \in \omega$, there exist parameters $(\ov{a}_s \mid s<n)$ and $(\ov{b}_t \mid t<2^n)$ from some model of the theory, such that
$$\varphi(\ov{b}_t; \ov{a}_s) \Leftrightarrow s \in w_t$$
where $(w_t \mid t < 2^n)$ enumerates the subsets of $n$.

If a theory fails to have the independence property, we say that the theory \emph{has NIP}, or \emph{is an NIP theory}.
\end{dfn}

We end with a characterization of NIP theories using the example in Corollary \ref{examples}(2).

\begin{cor}\label{rg} Fix a random ordered graph $\N$ in the signature $L=\{R,\prec\}$.

Let  $\R=\R_{i \in \N}(\N)$. 

Let $\W$ be the $\{E,\prec\}$-reduct of $\R$. 

A theory $T$ has NIP if and only if any $\R$-indexed indiscernible sequence in a 
model $\M$ of $T$ is a $\W$-indexed indiscernible sequence.
\end{cor}

\begin{proof}  First observe that, by Remark \ref{xhelp}, $\sigma(\R) = \{R_1, R_2, E, \prec\}$ where we may assume $R_1$ is substituted for the edge relation symbol in $L_1$ and $R_2$ is substituted for the edge relation symbol in $L_2$.

Let $\N' = \N \uphp \{<\}$.
We observe that $\W = \I_{i \in \N'}(\N')$.

The right-to-left direction follows the argument in \citep[Lemma 5.2]{sc12} closely, so we merely sketch it here.  By Corollary \ref{examples}(2), $\R$ has RP, so by Theorem \ref{character}, $\R$-indexed indiscernible sequences have the modeling property.  
If $T$ has IP, then there are parameters in a model of $T$ witnessing this and an $\R$-indexed indiscernible sequence $(\ov{a}_i \mid i \in \R)$ from some $\aleph_1$-saturated elementary extension $\M \vDash T$ locally-based on these parameters such that $(\ov{a}_i \mid i \in \W)$ is not a $\W$-indexed indiscernible sequence.

For the left-to-right direction, we can adapt the argument in \citep[Lemma 5.4]{sc12}.  Assume that there is an $\R$-indexed indiscernible sequence $(\ov{a}_i \mid i \in \R)$ from some model $\M \vDash T$ that is not a $\W$-indexed indiscernible sequence.  It is convenient to think of $i \mapsto \ov{a}_i$ as a map, so that we may refer to $\ov{a}_i$ as the \emph{image} of $i$.

Since $(\ov{a}_i \mid i \in \R)$ is not a $\W$-indexed indiscernible sequence, there exist $n \in \omega$ and $n$-tuples $\ii, \jj$ from $\R$ such that
$$\ii \sim_\W \jj ,$$
but
$$\ov{a}_{\ii} \not\equiv_{\M} \ov{a}_{\jj} .$$
Thus, there is some formula $\varphi \in L(\M)$ such that
$$\M \vDash \varphi(\ov{a}_{\ii}) \textrm{~and~} \M \vDash \neg \varphi(\ov{a}_{\jj}) ,$$
and so by $\R$-indexed indiscernibility of the sequence,
$${\ii} \not\sim_{\R} {\jj} .$$

Since $R^\N$ is symmetric with no loops, complete quantifier-free $n$-types in $\R$ are of the form
\begin{eqnarray}\nonumber
p(x_0,\ldots,x_{n-1}) \cup \{R_1(x_i,x_j)\mid (i,j) \in Y \wedge i<j \} \cup \{\neg R_1(x_i,x_j) \mid (i,j) \in (n \times n) \setminus Y  \wedge i<j \}  \\\nonumber
\cup \{R_2(x_i,x_j)\mid (i,j) \in Z \wedge i<j \} \cup \{\neg R_2(x_i,x_j) \mid (i,j) \in (n \times n) \setminus Z  \wedge i<j \}
\end{eqnarray}
where $p$ is a complete quantifier-free $n$-type in $\W$, and $Y, Z \subseteq n \times n$.  
%

For any $s \in \{1,2\}$, let $R_{s}(x_i,x_j)^0$ denote $R_{s}(x_i,x_j)$ and $R_{s}(x_i,x_j)^1$ denote $\neg R_{s}(x_i,x_j)$.
Given a complete quantifier-free $n$-type $q'$ and some $s \in \{1,2\}$, define $t_s(q')=0$ if $R_{s}(x_i,x_j) \in q'$, and otherwise $t_s(q')=1$.
For any $s \in \{1,2\}$, for every pair $i<j<n$ and any complete quantifier-free $n$-type $q'$, define $\tau^s_{(i,j)}(q') = (q' \setminus \{R_{s}(x_i,x_j)^{t_s(q')}\}) \cup \{R_{s}(x_i,x_j)^{1-t_s(q')}\}$
Let $\Gamma = \{\tau^s_{(i,j)} \mid i<j<n \wedge s \in \{1,2\}\}$.

Assume that $q_1$ is the quantifier-free type of $\ii$ in $\R$ and $q_2$ is the quantifier-free type of $\jj$ in $\R$.  
Since $\ii \sim_{\W} \jj$, the complete quantifier-free types $q_1, q_2$ agree on their restriction $p^*(x_0,\ldots,x_{n-1})$ to the signature $\{E,\prec\}$.  Thus, since ${\ii} \not\sim_{\R} {\jj}$, there is some integer $m \geq 1$ and some finite sequence $(\tau_j)_{j < m}$ from $\Gamma$ such that $(\tau_{m-1} \circ \cdots \circ \tau_1 \circ \tau_0)(q_1) = q_2$.  We may additionally assume that for each $j <m$, $(\tau_{j-1} \circ \cdots \circ \tau_1 \circ \tau_0)(q_1)$ is a complete quantifier-free $n$-type in $\R$.  One way to do this would be get rid of any $R_2$-edges specified by $q_1$, and then add in any $R_2$-edges specified by $q_2$, one-by-one.  Since the $R_2$-edges are specified by $q_2$, they only hold of $E$-inequivalent pairs, and thus at every step we have a type consistent with the theory of $\R$.  After this point, $R_1$-edges may be flipped as necessary within the $E$-classes.

There is a least $j_0 < m$ such that all tuples $\ov{s}$ satisfying $(\tau_{j_0-1} \circ \cdots \circ \tau_1 \circ \tau_0)(q_1)$ in $\R$ yield an image $\ov{a}_{\ov{s}}$ satisfying $\neg \varphi$ in $\M$.  Replacing $q_1$ by $(\tau_{j_0-2} \circ \cdots \circ \tau_1 \circ \tau_0)(q_1)$ and $q_2$ by $(\tau_{j_0-1} \circ \cdots \circ \tau_1 \circ \tau_0)(q_1)$,
we may assume that there is some $(s,t) \in n \times n$, some $l \in \{1,2\}$ and some quantifier free type $q^*$ such that $q^* = q_1 \cap q_2$, which we shall call the \emph{common quantifier free type}, and such that $q_1 = q^* \cup \{R_l(x_s,x_t)\}$ and $q_2 = q^* \cup \{\neg R_l(x_s,x_t)\}$ (the assignment of $R_l$ and $\neg R_l$ is also without loss of generality, by switching $\varphi$ with $\neg \varphi$).  For convenience, we write $q^* = q^*(x_s,x_t,\ov{u})$, where we define $\ov{u} = (x_i)_{i \in (n \setminus \{s,t\})}$.


In the first case, assume that $l=0$.  Then $E(x_s,x_t) \in p^*$ and $q_1$ and $q_2$ disagree on $R_1(x_s,x_t)$.  Note that $\N$ is a \fr~limit, in both of its roles in $\R=\I_{i \in \N}(\N)$.  Thus, there is some $(n-2)$-tuple $\ov{c}$ from $\R$ such that we may realize arbitrary finite bipartite graphs $(A,B)$ with edge relation $R_1$ as induced subgraphs of the class $[i_s]/E(=[i_t]/E)$ with the property that for all $a \in A$, for all $b \in B$, $\R \vDash q^*(a,b,\ov{c})$.  This allows the images of the tuples $(a,b,\ov{c})$ in $\M$ to satisfy IP using $\varphi$ partitioned as $\varphi(x_s;x_t,\ov{u})$.

In the second case, assume that $l=1$.  Then $\neg E(x_s,x_t) \in p^*$ and $q_1$ and $q_2$ disagree on $R_2(x_s,x_t)$.  We may realize arbitrary finite bipartite graphs on pairs $([a]/E,[b]/E)$ in the quotient structure (mentioned in Observation \ref{sigma}(2)) with edge relation $R_2$ and with all the required $R_2$-relations to some fixed $(n-2)$-tuple $[\ov{c}]/E$, as dictated by the common quantifier-free type.  Then all $R_1$-configurations are easily found within the classes $[a]/E$, $[b]/E$, $[c_i]/E$ to match the common quantifier free type, and so we may complete this argument as in the first case.
\end{proof}

\section*{Acknowledgements}\label{last}

Thanks go to the referee for comments and suggestions that improved this paper.
The author thanks Dana Barto\v{s}ov\'{a} for pointing out that the \texttt{qfi} condition is not required for the proof of Theorem \ref{character}.  Thanks go to Miodrag Soki\'{c} for the useful references and explanation of the history around Theorem \ref{sokprod}, and to Dugald Macpherson and John Baldwin for help with terminology.  Thanks go to Thomas Scanlon for comments that improved an earlier draft of this paper.  The author thanks Charles Steinhorn for a discussion of these ideas in their early stages.

\bibliographystyle{plainnat}
\bibliography{prerefab2020}

\end{document}